\renewcommand{\cite}{\citet}
\makeatletter \@addtoreset{equation}{section} \makeatother
\renewcommand\thefigure{\thesection.\@arabic\c@figure}
\renewcommand\thetable{\thesection.\@arabic\c@table}
\theoremstyle{plain}
\newtheorem{theorem}{Theorem}[section]                                          
\newtheorem{proposition}[theorem]{Proposition}
\theoremstyle{definition}
\theoremstyle{remark}
\newcommand{\mc}[1]{{\mathcal #1}}
\newcommand{\bb}[1]{{\mathbb #1}}
\newcommand{\<}{\langle}
\renewcommand{\>}{\rangle}
\begin{document}

\keywords{hydrodynamic limit, random environment, exclusion process} 
\subjclass[2000]{65K35,60G20,60F17}

\author{Milton Jara}
\address{CEREMADE \\ Universit\'e Paris-Dauphine\\
Place du Mar\'echal de Lattre de Tassigny\\
Paris CEDEX 75775\\
France\\
}
\email{jara@ceremade.dauphine.fr}

\title[Hydrodynamics for inhomogeneous sytems]{Hydrodynamic limit of the exclusion process in inhomogeneous media} 

\begin{abstract}
We obtain the hydrodynamic limit of a simple exclusion process in an inhomogeneous environment of divergence form. Our main assumption is a suitable version of $\Gamma$-convergence for the environment. In this way we obtain an unified approach to recent works on the field.
\end{abstract}

\maketitle

\section{Introduction}

Since the seminal paper \cite{GPV}, the theory of hydrodynamic limit of interacting particle systems has evolved into a powerful tool in the study of non-equilibrium properties of statistical systems of many components (see the book \cite{KL} for a comprehensive exposition). Recently, and due to the infuence of physical and mathematical works about random walks in random environment, an increasing attention has been posed into particle systems evolving in random environments. Despite the early works \cite{Fri}, \cite{Qua}, \cite{Kou}, we mention \cite{FM}, \cite{Qua} \cite{Nag}, \cite{JL}, \cite{Fag}, \cite{GJ}, \cite{FJL}, \cite{FL}, \cite{Fag2}, \cite{GJ2}. In \cite{GJ}, \cite{JL} the {\em corrected empirical density} was introduced, which is nothing but a microscopic version of the compensated compactness lemma of Tartar \cite{Tar}. Roughly speaking, when the inhomogeneous environment (random or not) has a divergence form and has a $\Gamma$-limit, space homogenization of the environment and time homogenization of the interaction decouples, and the standard tools from the theory of hydrodynamic limit can be used to obtain the asymptotic behavior of the density of particles in a family of models, including the exclusion process and the zero-range process. 

In this review, we give an unified approach to this problem, recovering previous results in \cite{Nag}, \cite{JL}, \cite{Fag}, \cite{FJL}, \cite{Fag2}, in a simple way. In order to concentrate our efforts in the influence of the inhomogeneous environment on the asymptotics of the density of particles, we consider the simplest model of interacting particle systems, which is the symmetric exclusion process $\eta_t^n$ in an unoriented graph. In this process, particles perform symmetric random walks on a graph $\{X_n\}_n$ with some rates $\omega^n=\{\omega_{x,y}^n; x,y \in X_n\}$, conditioned to have at most one particle per site. We think of $\{X_n\}_n$ as a sequence of graphs embedding in some metric space $X$, and we are interested in the evolution of the measure $\pi_t^n(dx)$ in $X$, obtained by giving a mass $a_n^{-1}$ to each particle. 

This article is organized as follows. In Section \ref{s1} we give precise definitions of the exclusion process, the inhomogeneous environment and we state our main result. We also define what we mean by an approximation $\{X_n\}_n$ of $X$ and by $\Gamma$-convegence of the environment. In Section \ref{s2} we introduce the corrected empirical density and we prove our main theorem. In Section \ref{s3} we introduce the concept of energy solutions of the hydrodynamic equation, we prove uniqueness of such solutions and we obtain a substantial improvement of the main Theorem. The material of this Section is new and it gives a better understanding of the relation between $\Gamma$-convergence of the environment and hydrodynamic limit of the particle system. In Section \ref{s4} we discuss how to reobtain previous results in the literature relying in our main Theorem.

\section{Definitions and results}
\label{s1}
In this section we define the exclusion process in inhomogeneous environment and we recall some notions of $\Gamma$-convergence that will be necessary in order to obtain the hydrodynamic limit of this process.

\subsection{Partitions of the unity and approximating sequences}
\label{s1.1}
In this section we fix some notation and we define some objects which will be useful in the sequel. 
Let $(X,\mc B)$ be a Polish space. We assume that $X$ is $\sigma$-compact.
We say that a sequence of functions $\{\mc U_i; i \in I\}$ is a {\em partition of the unity} if:
\begin{description}
 \item[i)] for any $i \in I$, $\mc U_i:X \to [0,1]$ is a continuous function, 
 \item[ii)] for any $x \in X$, $\sum_{i \in I} \mc U_i(x)=1$,
 \item[iii)] for any $x \in X$, the set $\{i \in I; \mc U_i(x)>0\}$ is finite.
\end{description}

We say that the partition of the unity $\{\mc U_i;i \in I\}$ is {\em regular} if $\text{supp } \mc U_i$ is compact for any $i \in I$, and additionally $\mc U_i(X) = [0,1]$. 
We denote by $\mc M_+(X)$ the set of Radon, positive measures in $X$. The symbol $\{x_n\}_n$ will denote a sequence of elements $x_n$ in some space, indexed by the set $\bb N$ of positive integers.

Let $\{\mc U_i\}_i$ be a regular partition of the unity. We say that a sequence $\{x_i ; i \in I\}$ in $X$ is a {\em representative} of $\{\mc U_i\}_i$ if $\mc U_i(x_i)=1$ for any $i \in I$. Notice that we have $x_i \neq x_j$ for $i \neq j$. 

Let $\{\mc U_i^n; i \in I_n\}_n$ be a sequence of regular partitions of the unity. We say that a measure $\mu \in \mc M_+(X)$ is the scaling limit of the sequence $\{\mc U_i^n\}_n$ if there exists a sequence  $\{a_n\}_n$ of positive numbers such that for any sequence $\{x_i^n;i \in I_n\}$ of representatives of $\{\mc U_i^n\}_n$ we have
\[
\lim_{n \to \infty} \frac{1}{a_n} \sum_{i \in I_n} \delta_{x_i^n} = \mu
\]
with respect to the vague topology, where $\delta_x$ is the Dirac mass at $x \in X$. We call $\{a_n\}_n$ the {\em scaling} sequence. 

From now on, we fix a sequence $\{\mc U_i^n\}_n$ of regular partitions of the unity with scaling limit $\mu$, scaling sequence $\{a_n\}_n$ and we assume that $\mu(A)>0$ for any non-empty, open set $A \subseteq X$. 
Fix a sequence $\{x_i^n;i \in I_n\}$ of representatives of $\{\mc U_i^n\}_n$. Define $X_n = \{x_i^n; i \in I_n\}$. 
Since $\{\mc U_i^n\}$ is a partition of the unity, the induced topology in $X_n$ coincides with the discrete topology. For $x=x_i^n$, we will denote $\mc U_x^n= \mc U_i^n$. Define
\[
\mu_n(dx) = \frac{1}{a_n} \sum_{x \in X_n} \delta_x(dx).
\]

By definition, $\mu_n \to \mu$ in the vague topology. We denote by $\mc L^2(\mu_n)$ the Hilbert space of functions $f: X_n \to \bb R$ such that $\sum_{x \in X_n} f(x)^2 <+\infty$, equipped with the inner product
\[
\< f, g\>_n = \frac{1}{a_n} \sum_{x \in X_n} f(x)g(x).
\]

We define $\mc L^2(\mu)$, $\mc L^1(X_n)$ and $\mc L^1(\mu)$ in the analogous way and we denote $\<f,g\> = \int fg d\mu$.
We denote by $\mc C_c(X)$ the set of continuous functions $f: X \to \bb R$ with compact support. In the same spirit, we denote by $\mc C_c(X_n)$ the set of functions $f: X_n \to \bb R$ with finite support. We define the projection $S_n: \mc C_c(X) \to \mc C_c(X_n)$ by taking
\[
\big( S_n G\big)(x) = a_n \int G \mc U_x^n d\mu.
\]

This operator, under suitable conditions, can be extended to a bounded operator from $\mc L^2(X)$ to $\mc L^2(X_n)$. Notice that  $\int S_n G d\mu_n = \int G d\mu$. Therefore $S_n$ is continuous from $\mc L^1(\mu)$ to $\mc L^1(X_n)$.

\subsection{$\Gamma$-convergence}

Define $\bar{ \bb R} = [-\infty,+\infty]$. Let $(Y,\mc F)$ be a topological space, and let $F_n,F: Y \to \bar{ \bb R}$. We say that $F_n$ is $\Gamma$-convergent to $F$ if:
\begin{description}
  \item[i)] For any sequence $\{y_n\}_n$ in $Y$ converging to $y \in Y$,
  \[
  F(y) \leq \liminf_{n \to \infty} F_n(y_n).
  \]
  \item[ii)] For any $y \in Y$ there exists a sequence $\{y_n\}_n$ converging to $y$ such that
  \[
  \limsup_{n \to \infty} F_n(y_n) \leq F(y).
  \]
\end{description}

An important property of $\Gamma$-convergence is that it implies {\em convergence of minimizers} in the following sense:

\begin{proposition}
\label{p1}
Let $F_n, F: Y \to \bar{\bb R}$ be such that $F_n$ is $\Gamma$-convergent to $F$. Assume that there exists a relatively compact set $K \subseteq Y$ such that for any $n$,
\[
\inf_{y \in Y} F_n(y) =\inf_{y \in K} F_n(y).
\]

Then,
\[
\lim_{n \to \infty} \inf_{y \in K} F_n(y) = \min_{y \in Y} F(y).
\]
Moreover, if $\{y_n\}_n$ is a sequence in $K$ such that $\lim_n (F_n(y_n)-\inf_K F_n)=0$, then any limit point $y$ of $\{y_n\}_n$ satisfies $F(y) = \min_Y F$.
\end{proposition}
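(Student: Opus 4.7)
The plan is to apply the two halves of $\Gamma$-convergence separately to sandwich $m_n := \inf_Y F_n = \inf_K F_n$ between quantities tending to $m := \inf_Y F$, and, as a by-product, to exhibit a minimizer of $F$ so that the $\min$ in the statement is justified.

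For the $\limsup$ bound on $m_n$, I would fix an arbitrary $y \in Y$, invoke property (ii) to produce a recovery sequence $\{y_n\}_n$ in $Y$ with $y_n \to y$ and $\limsup_n F_n(y_n) \leq F(y)$, and then note that $m_n \leq F_n(y_n)$ because $y_n \in Y$. Hence $\limsup_n m_n \leq F(y)$, and minimizing over $y \in Y$ gives $\limsup_n m_n \leq m$. The hypothesis $\inf_Y F_n = \inf_K F_n$ is used implicitly here: it is what legitimizes comparing $m_n$ to $F_n(y_n)$ even though the recovery sequence need not enter $K$.

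For the matching $\liminf$ bound I would choose near-minimizers $\tilde y_n \in K$ with $F_n(\tilde y_n) \leq m_n + 1/n$, use relative compactness of $K$ to extract a subsequence $\tilde y_{n_k} \to \tilde y$ with $\tilde y \in \overline{K} \subseteq Y$, and apply property (i) to obtain $F(\tilde y) \leq \liminf_k F_n(\tilde y_{n_k}) \leq \liminf_n m_n$. Together with the upper bound and the trivial $m \leq F(\tilde y)$, this forces $F(\tilde y) = m = \lim_n m_n$, so the infimum of $F$ is attained and the first conclusion of the proposition follows. The second conclusion is then almost automatic: if $y_n \in K$ satisfies $F_n(y_n) - m_n \to 0$ and $y_{n_k} \to y$ along some subsequence, property (i) gives $F(y) \leq \liminf_k F_n(y_{n_k}) = \lim_n m_n = m$, and combined with $F(y) \geq m$ this yields $F(y) = \min_Y F$.

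There is no serious technical obstacle; the only point deserving attention is that the recovery sequence from (ii) is not guaranteed to lie in $K$, which is precisely why the hypothesis $\inf_Y F_n = \inf_K F_n$ appears (rather than, say, global compactness of $Y$). Relative compactness of $K$ is used only on the lower-bound side, where the minimizing sequence is chosen inside $K$ by construction, and it is also what produces the limit point $\tilde y \in Y$ needed to realize the minimum of $F$.
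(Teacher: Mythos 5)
Your overall strategy is the standard one for this classical fact (the paper itself offers no proof of Proposition \ref{p1}): recovery sequences for the upper bound on $m_n=\inf_K F_n=\inf_Y F_n$, and relative compactness of $K$ together with the liminf inequality for the lower bound. One step, however, is written in the wrong direction. After extracting, by compactness, a convergent subsequence $\tilde y_{n_k}\to\tilde y$ of the near-minimizers, the liminf inequality gives $F(\tilde y)\le\liminf_k F_{n_k}(\tilde y_{n_k})\le\liminf_k m_{n_k}$, and this last quantity satisfies $\liminf_k m_{n_k}\ge\liminf_n m_n$, not $\le$: a liminf taken along a subsequence can only increase. So what your argument actually establishes is that $m_{n_k}\to\min_Y F$ along the particular subsequence produced by compactness; it does not yet control $\liminf_n m_n$ over the full sequence, which is precisely what the first conclusion asserts.

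The repair is routine but should be made explicit: either first pass to a subsequence $(n_j)$ along which $m_{n_j}\to\liminf_n m_n$, and only then choose near-minimizers and apply compactness within that subsequence, which yields $\min_Y F\le F(\tilde y)\le\liminf_n m_n$ and closes the sandwich with your upper bound; or run your argument starting from an arbitrary subsequence and invoke the principle that $m_n\to m$ if every subsequence admits a further subsequence converging to $m$. Two smaller points deserve a word. Since $F_n$ takes values in $\bar{\bb R}$, the selection $F_n(\tilde y_n)\le m_n+1/n$ must be guarded against $m_n=-\infty$ (choose $F_n(\tilde y_n)\le\max(m_n+1/n,\,-n)$, say); the argument then still delivers $F(\tilde y)=-\infty=\min_Y F$. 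And you are implicitly reading ``relatively compact'' as ``sequentially relatively compact,'' which is harmless in the Hilbert-space settings where the paper applies this proposition but is not automatic for a general topological $Y$. Everything else, including the use of $\inf_Y F_n=\inf_K F_n$ on the recovery-sequence side and the identification of limit points of almost-minimizers once the first conclusion is in hand, is correct.
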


A useful property that follows easily from the definition is the stability of $\Gamma$-convergence under continuous perturbations:

\begin{proposition}
\label{p2}
Let $F_n,F: Y \to \bar{\bb R}$ be such that $F_n$ is $\Gamma$-convergent to $F$. Let $G_n: Y \to \bb R$ be such that $G_n$ converges uniformly to a continuous limit $G$. Then, $F_n+G_n$ is $\Gamma$-convergent to $F+G$.
\end{proposition}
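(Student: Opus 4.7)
The plan is to verify the two defining conditions of $\Gamma$-convergence for the sum $F_n+G_n$, leveraging the fact that uniform convergence together with continuity of the limit $G$ implies that $G_n(y_n)\to G(y)$ whenever $y_n\to y$. This is the single ingredient beyond the $\Gamma$-convergence of $F_n$ that one needs, so the argument is essentially bookkeeping for liminf's and limsup's.

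First, I would record the key auxiliary fact: if $y_n\to y$ in $Y$, then $G_n(y_n)\to G(y)$. Indeed, $|G_n(y_n)-G(y)|\leq \|G_n-G\|_\infty+|G(y_n)-G(y)|$, and both terms tend to zero by uniform convergence and continuity of $G$, respectively. Since $G_n$ and $G$ take values in $\bb R$ (not $\bar{\bb R}$), no indeterminate forms $\infty-\infty$ can appear when we add $G_n(y_n)$ or $G(y)$ to other quantities.

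Next I would verify condition (i) (the liminf inequality). Given any sequence $y_n\to y$, the auxiliary fact gives $\lim_n G_n(y_n)=G(y)$, and the $\Gamma$-lower bound for $F_n$ gives $\liminf_n F_n(y_n)\geq F(y)$. Using the standard identity $\liminf_n(a_n+b_n)=\liminf_n a_n+\lim_n b_n$ whenever the second limit exists in $\bb R$, I conclude
\[
\liminf_{n\to\infty}\bigl(F_n(y_n)+G_n(y_n)\bigr)=\liminf_{n\to\infty}F_n(y_n)+\lim_{n\to\infty}G_n(y_n)\geq F(y)+G(y).
\]

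Finally, for condition (ii) (the recovery sequence), I would fix $y\in Y$ and choose a sequence $\{y_n\}_n$ converging to $y$ with $\limsup_n F_n(y_n)\leq F(y)$, provided by the $\Gamma$-convergence of $F_n$ to $F$. The auxiliary fact again yields $G_n(y_n)\to G(y)$ for this same sequence, so
\[
\limsup_{n\to\infty}\bigl(F_n(y_n)+G_n(y_n)\bigr)\leq\limsup_{n\to\infty}F_n(y_n)+\lim_{n\to\infty}G_n(y_n)\leq F(y)+G(y),
\]
which is exactly the recovery bound for $F+G$. There is no substantive obstacle; the only mild point of care is ensuring that one never writes $\infty-\infty$, which is guaranteed by the assumption that $G_n$ is real-valued. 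The uniform convergence hypothesis is essential here (mere pointwise convergence of $G_n$ would fail to transfer the bound from $y$ to the recovery sequence $y_n$).
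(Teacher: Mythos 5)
Your proof is correct and is exactly the "easy verification from the definition" that the paper alludes to when it omits the proof of Proposition \ref{p2}: the key observation that $y_n\to y$ implies $G_n(y_n)\to G(y)$ (via uniform convergence plus continuity of $G$), followed by the liminf and recovery-sequence bookkeeping, with the real-valuedness of $G_n$ ruling out indeterminate forms. Nothing to add.
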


\subsection{The exclusion process in inhomogeneous environment}

In this section we define the exclusion process in inhomogeneous environment as a system of particles evolving in the set $X_n$. 
Let $\omega^n = \{\omega_{x,y}^n;x,y \in X_n\}$ be a sequence of non-negative numbers such that $\omega^n_{x,x}=0$ and $\omega^n_{x,y}=\omega^n_{y,x}$ for any $x,y \in X_n$. We call $\omega^n$ the {\em environment}. We define the exclusion process $\eta_t^n$ with environment $\omega^n$ as a continuous-time Markov chain of state space $\Omega_n = \{0,1\}^{X_n}$ and generated by the operator
\[
L_n f(\eta) = \sum_{x,y \in X_n} \omega_{x,y}^n \big[f(\eta^{x,y}) - f(\eta)\big],
\]
where $\eta$ is a generic element of $\Omega_n$, $f: \Omega_n \to \bb R$ is a function which depends on $\eta(x)$ for a finite number of elements $x \in X_n$ (that is, $f$ is a {\em local function}) and $\eta^{x,y} \in \Omega_n$ is defined by
\[
\eta^{x,y}(z) = 
\begin{cases}
\eta(y), & \text{if } z=x\\
\eta(x), & \text{if } z=y\\
\eta(z), & \text{if } z \neq x,y.\\
\end{cases}
\]

In order to have a well-defined Markovian evolution for any initial distribution $\eta_0^n$, we assume that $\sup_x \sum_{y \in X_n} \omega^n_{x,y} <+\infty$.
We interpret $X_n$ as a set of sites and $\eta_t^n(x)$ as the number of particles at site $x \in X_n$ at time $t$. Since $\eta_t^n(x) \in \{0,1\}$, there is at most one particle per site at any given time: this is the so-called {\em exclusion rule}. Notice that the dynamics is conservative in the sense that no particles are annihilated or destroyed.

Our interest is to study the collective behavior of particles for the sequence of processes $\{\eta_\cdot^n\}_n$. In order to do this, we introduce the {\em empirical density of particles} as the measure-valued process $\pi_t^n$ defined by
\[
\pi_t^n(G) = \frac{1}{a_n} \sum_{x \in X_n} \eta_t^n(x) S_n G(x)
\]
for any $G \in \mc C_c(X)$. Using Riesz's theorem, it is not difficult to check that $\pi_t^n$ is effectively a positive Radon measure in $X$.  
Observe that when $\eta_0^n(x)=1$ for any $x \in X_n$, then $\eta_t^n(x)=1$ for any $x \in X_n$ and any $t \geq 0$. In this situation, the empirical process $\pi_t^n$ is identically equal to the measure $\mu$.  Notice that the random  variable $\pi_t^n$ defined in this way corresponds to a  process defined in the space $\mc D([0,\infty), \mc M_+(X))$ of c\`adl\`ag paths with values in $\mc M_+(X)$. For functions $G:X_n \to \bb R$, we define $\pi_t^n(G) = a_n^{-1} \sum_x \eta_t^n(x) G(x)$.

\subsection{$\Gamma$-convergence of the environment}

In this section we will make a set of assumptions on the environment $\{\omega^n\}_n$ which will allows us to obtain an asymptotic result for the sequence $\{\pi_\cdot^n\}_n$. We start with two assumptions about the sequence of partitions of the unity $\{\mc U_x^n\}_n$. Our first assumption corresponds to a sort of ellipticity condition on the partitions of the unity $\{\mc U_x^n\}_n$: 

\begin{description}
\item[\bf (H1)] There exists $\Theta <+\infty$ such that 
\[
\sup_{x \in X_n} a_n \int \mc U_x^n d\mu \leq\Theta \text{ for any $n>0$.}
\]

\end{description}

Under this condition, the projection $S_n$ satisfies $||S_n G||_\infty \leq \theta ||G||_\infty$, and by interpolation $S_n$ can be extended to a continuous operator from $\mc L^2(\mu)$ to $\mc L^2(X_n)$. Our second condition states that $S_n$ is close to an isometry when $n \to \infty$:
\begin{description}
  \item[\bf{(H2)}] For any $F \in \mc L^2(\mu)$, we have
  \[
  \lim_{n \to \infty} \<S_n F, S_n F\>_n = \<F,F\>.
  \] 
\end{description}

Now we are ready to discuss on which sense we will say that the environment $\omega^n$ converges. 
For a given function $F: X_n \to \bb R$ of finite support, we define $\mc L_n F$ by
\[
\mc L_n F(x) = \sum_{y \in X_n} \omega_{x,y}^n \big(F(y)-F(x)\big).
\]

It turns out that $\mc L_n$ can be extended to a non-positive operator in $\mc L^2(X_n)$. In fact, for any function $F$ of finite support, the {\em Dirichlet form}
\[
\<F, - \mc L_n F\>_n = \frac{1}{2 a_n} \sum_{x, y \in X_n} \omega_{x,y}^n \big(F(y)-F(x)\big)^2
\]
is clearly non-negative. For a function $G \in \mc L^2(\mu)$, define $\mc E_n(G) = \<S_n G, -\mc L_n S_n G\>$. Notice that $\mc E_n: \mc L^2(\mu) \to \bar{\bb R}$ is a quadratic form. Now we are ready to state our first hypothesis about the environment:
\begin{description}
  \item[{\bf (H3)}] There exists a non-negative, symmetric operator $\mc L: D(\mc L) \subseteq \mc L^2(\mu) \to \mc L^2(\mu)$ such that $\mc E_n$ is $\Gamma$-convergent to $\mc E$, where $\mc E(G) = -\int G \mc L G d\mu$. 
\end{description}

Our second hypothesis about the environment $\omega^n$ concerns to its $\Gamma$-limit $\mc L$:
\begin{description}
  \item[{\bf (H4)}] There exists a dense set $\mc K \subseteq \mc C_c(X)$ such that $\mc K$ is a kernel for the operator $\mc L$, and for any $G \in \mc K$, $\mc LG$ is continuous and $\int |\mc L G| d\mu <+\infty$.
\end{description}

\subsection{Hydrodynamic limit of $\eta_t^n$}

In this section we explain what we understand as the hydrodynamic limit of $\eta_t^n$. We say that a sequence $\{\nu_n\}_n$ of distributions in $\Omega_n$ is {\em associated } to a function $u:X \to \bb R$ if for any function $G \in \mc C_c(X)$ and any $\epsilon >0$ we have
\[
\lim_{n \to \infty} \nu_n \Big\{ \Big|\frac{1}{a_n} \sum_{x \in X_n} \eta(x) G(x) - \int G(x)u(x)\mu(dx)\Big|>\epsilon\Big\}=0. 
\]

Notice that we necessarily have $0 \leq u(x) \leq 1$ for any $x \in X$, since $\eta(x) \in \{0,1\}$.
Fix an initial profile $u_0:X \to [0,1]$ and take a sequence of distributions $\{\nu_n\}$ associated to $u_0$. Let $\eta_t^n$ be the exclusion process with initial distribution $\nu_n$. We denote by $\bb P_n$ the law of $\eta_t^n$ in $\mc D([0,\infty),\Omega_n)$ and by $\bb E_n$ the expectation with respect to $\bb P_n$. The fact that $\{\nu_n\}_n$ is associated to $u_0$ can be interpreted as a law of large numbers for the empirical measure $\pi_0^n$: $\pi_0^n(dx)$ converges in probability to the deterministic measure $u_0(x) \mu(dx)$. We say that the hydrodynamic limit of $\eta_t^n$ is given by the equation $\partial_t u = \mc L u$ if for any $t>0$, the empirical measure $\pi_t^n(dx)$ converges in probability to the measure $u(t,x) \mu(dx)$, where $u(t,x)$ is the solution of the equation $\partial_t u = \mc L u$ with initial condition $u_0$. Before stating our main result in a more precise way, we need some definitions. 

For $F, G \in D(\mc L)$, define the bilinear form $\mc E(F,G) = - \int F \mc L G d\mu$. Notice that $\mc E(F,G)$ is still well defined if only $G \in D(\mc L)$. 
We say that a function $u:[0,T] \times X \to [0,1]$ is a weak solution of (\ref{echid}) with initial condition $u_0$ if $\int_0^T \int u_t^2 d\mu dt <+\infty$ and for any differentiable path $G: [0,T] \to \mc K$ such that $G_T \equiv 0$ we have
\[
\<u_0,G_0\> +\int_0^T \Big\{ \<\partial_t G_t,u_t\>  -\mc E(G_t,u_t)\Big\}dt=0.
\]

\begin{theorem}
\label{t1}
Let $\{\nu_n\}_n$ be associated to $u_0$ and consider the exclusion process $\eta_t^n$ with initial distribution $\nu_n$. Assume that $\int \pi_0^n(dx)$ is uniformly finite:
\begin{description}
  \item[{\bf (H5)}] 
  \[
  \lim_{M \to \infty} \sup_n \nu_n\Big\{\frac{1}{a_n} \sum_{x \in X_n} \eta(x) >M \Big\} =0.
  \]
\end{description}

Then, the sequence of processes $\{\pi_\cdot^n(dx)\}_n$ is tight and the limit points are concentrated on measures of the form $u(t,x)\mu(dx)$, where $u(t,x)$ is a weak solution of the {\em hydrodynamic equation}
\begin{equation}
\label{echid}
\left\{
\begin{array}{rcl}
\partial_t u  & = & \mc L u, \\
u(0,\cdot) & = & u_0(\cdot).\\
\end{array}
\right.
\end{equation}

If such solution is unique, the process $\pi_\cdot^n(dx)$ converges in probability with respect to the Skorohod topology of $\mc D([0,\infty),\mc M_+(X))$ to the deterministic trajectory $u(t,x)\mu(dx)$.
\end{theorem}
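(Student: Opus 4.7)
My plan is to implement the standard martingale approach to hydrodynamic limits, with the \emph{corrected empirical density} serving as the bridge through which the static $\Gamma$-convergence hypothesis (H3) controls the dynamical drift of $\eta_t^n$.

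\emph{Tightness.} For each $G \in \mc K$, Dynkin's formula gives
\[
M_t^n(G) \;:=\; \pi_t^n(G) - \pi_0^n(G) - \int_0^t \frac{1}{a_n}\sum_{x \in X_n} \eta_s^n(x)\, \mc L_n(S_n G)(x)\, ds
\]
as a martingale whose predictable quadratic variation is of order $a_n^{-1}$ times $\mc E_n(G)\|G\|_\infty^2$; by (H3), $\mc E_n(G)$ is bounded, so $M_t^n(G)\to 0$ in $\mc L^2$. Combined with the mass bound (H5) and the integrability $\int |\mc L G|\,d\mu <\infty$ from (H4), Aldous's criterion yields tightness of each real-valued process $\pi_\cdot^n(G)$, hence of $\pi_\cdot^n$ in $\mc D([0,\infty),\mc M_+(X))$ by density of a countable subset of $\mc K$.

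\emph{Identification via the corrector.} The drift $a_n^{-1}\sum_x \eta_s^n(x)\,\mc L_n(S_n G)(x)$ is \emph{not} close to $\pi_s^n(\mc L G)$ in general, which is the genuine homogenization issue. I would construct, for each $G \in \mc K$, a corrector $h_G^n : X_n \to \bb R$ as the minimizer of a variational problem of the form
\[
\inf_{h}\bigl\{\mc E_n(S_n G + h) - 2\<h,\,S_n(\mc L G)\>_n\bigr\},
\]
and set $\tilde G^n := S_n G + h_G^n$, so that $\mc L_n \tilde G^n$ is close to $S_n(\mc L G)$ in the dual sense needed to test against $\eta_s^n$. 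The $\Gamma$-convergence of $\mc E_n$ to $\mc E$, via Propositions \ref{p1} and \ref{p2} applied to the perturbed functional, guarantees that $h_G^n \to 0$ in $\mc L^2(\mu_n)$ while $\mc E_n(h_G^n)$ stays bounded. The correction to the observable, $a_n^{-1}\sum_x \eta_t^n(x)\,h_G^n(x)$, then vanishes in probability by Cauchy--Schwarz together with (H5) and (H2). Passing to the limit in the corrected martingale identity, every limit point $\pi_\cdot$ satisfies $\pi_t(G) = \pi_0(G) + \int_0^t \pi_s(\mc L G)\, ds$ for all $G \in \mc K$; the pointwise bound $\eta_t^n(x)\in\{0,1\}$ plus (H2) force $\pi_t(dx) = u(t,x)\mu(dx)$ with $0\le u\le 1$, and testing against a time-dependent path $G_t \in \mc K$ with $G_T \equiv 0$, followed by integration by parts in time, produces the weak formulation of (\ref{echid}). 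The $\mc L^2$ energy bound $\int_0^T\!\int u_t^2\, d\mu\, dt<\infty$ is immediate from $u\le 1$ and the finite mass of test-function supports.

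\emph{Convergence in probability and main obstacle.} Under uniqueness of the weak solution, every subsequence of $\{\pi_\cdot^n\}_n$ shares the same limit, upgrading tightness to convergence in probability in the Skorohod topology. The principal technical obstacle I expect is making the corrector step rigorous: (H3) is a purely static, variational statement, whereas the argument requires $\mc L_n \tilde G^n - S_n(\mc L G)$ to be negligible after averaging against $\eta_s^n$ uniformly in $s\in[0,T]$. This forces a passage from $\Gamma$-convergence to a dynamical estimate, typically achieved by combining the variational characterization of $h_G^n$ with an entropy inequality relative to a product reference measure, so that Dirichlet-form control on $X_n$ translates into $\mc L^1$-in-time estimates on the empirical measure.
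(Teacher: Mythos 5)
Your overall architecture (corrector built from the variational/$\Gamma$-convergence structure, martingale decomposition, identification of limit points, uniqueness $\Rightarrow$ convergence) is the paper's, but two steps are genuinely wrong or missing as written. First, the tightness argument: you apply Aldous's criterion to the \emph{uncorrected} martingale and justify the quadratic-variation bound by saying ``by (H3), $\mc E_n(G)$ is bounded.'' $\Gamma$-convergence does not give this: it bounds $\limsup_n \mc E_n(y_n)$ only along a \emph{recovery sequence} $y_n \to G$, not along the constant sequence $y_n = G$; for a fixed $G \in \mc K$ the quantity $\mc E_n(G) = \langle S_n G, -\mc L_n S_n G\rangle_n$ may diverge, and the drift $a_n^{-1}\sum_x \eta_s^n(x)\,\mc L_n(S_nG)(x)$ is likewise uncontrolled. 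The paper avoids this by proving tightness for the \emph{corrected} process $\hat\pi_\cdot^n(G) = a_n^{-1}\sum_x \eta_t^n(x)\,S_nG_n(x)$ first (where the quadratic variation is bounded by $t\,a_n^{-1}\mc E_n(G_n)$ with $\mc E_n(G_n) \to \mc E(G)$, and the drift is bounded by $\int|\mc L G|\,d\mu$), and only then transfers tightness to $\pi_\cdot^n(G)$ via (H5) and $\|S_nG_n - S_nG\|_{\mc L^2(X_n)} \to 0$. You have the corrector in hand; you just deploy it one step too late.

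Second, your corrector is defined by minimizing $\mc E_n(S_nG+h) - 2\langle h, S_n(\mc LG)\rangle_n$, which is not coercive (no $\lambda\|\cdot\|^2$ term), so a minimizer need not exist or be unique; and, more importantly, you then worry that matching $\mc L_n \tilde G^n$ to $S_n(\mc LG)$ requires a ``dynamical estimate'' via an entropy inequality. It does not. The paper takes $G_n$ as the minimizer of $\mc E_n(F) + \lambda\langle S_nF,S_nF\rangle_n - 2\langle S_nF,S_nH\rangle_n$ with $H = (\lambda-\mc L)G$, i.e.\ the resolvent equation $(\lambda-\mc L_n)S_nG_n = S_nH$, which yields the \emph{exact algebraic identity} $\mc L_n S_n G_n = S_n\mc LG + \lambda(S_nG_n - S_nG)$. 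The error term is then $\lambda\,\pi_s^n(S_nG_n - S_nG)$, controlled uniformly in $s$ by Cauchy--Schwarz, (H2) and (H5) --- a purely static estimate. The entropy inequality you invoke is needed in this paper only for the energy estimate of Theorem \ref{t3}, not for Theorem \ref{t1}. With these two repairs your argument coincides with the paper's proof.
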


Usually in the literature, hydrodynamic limits are obtained in finite volume, since the pass from finite to infinite volume is non-trivial. Assumption {\bf (H5)} is in this spirit: it is automatically satisfied when the cardinality of $X_n$ is of the order of $a_n$ (on which case $\mu(X)<+\infty$), and it is very restrictive when $X_n$ is infinite. For simplicity, we restrict ourselves to the case on which {\bf (H5)} is satisfied.

\section{Hydrodynamic limit of $\eta_t^n$: proofs}
\label{s2}
In this section we obtain the hydrodynamic limit of the process $\eta_t^n$. The strategy of proof of this result is the usual one for convergence of stochastic processes. First we prove tightness of the sequence of processes $\{\pi_\cdot^n\}_n$. Then we prove that any limit point of this sequence is concentrated on solutions of the hydrodynamic equation. Finally, a uniqueness result for such solutions allows us to conclude the proof. However, the strategy outlined above will not be carried out for $\{\pi_\cdot^n\}_n$ directly, but for another process $\hat \pi_\cdot^n$, which we call the {\em corrected} empirical process.

\subsection{The corrected empirical measure}
\label{s2.1}

In this section we define the so-called corrected empirical measure, relying on the $\Gamma$-convergence of the environment. First we need to extract some information about convergence of the operators $\mc L_n$ to $\mc L$ from the $\Gamma$-convergence of the associated Dirichlet forms.

Take a general Hilbert space $\mc H$ and let $\mc A$ be a non-negative, symmetric operator defined in $\mc H$. By Lax-Milgram theorem, we know that for any $\lambda>0$ and any $g \in \mc H$, the equation $(\lambda+\mc A)f =g$ has a unique solution in $\mc H$. Moreover, the solution $f$ is the minimizer of the functional $f \mapsto \<f,\mc A f\>+\lambda||f||^2-2\<f,g\>$. Fix $\lambda>0$. For a given function $G \in \mc L^2(\mu)$, define the functionals
\[
\mc E_n^G(F) = \mc E_n(F) +\lambda\<S_nF,S_nF\>_n -2 \<S_n F, S_n G\>_n,
\]
\[
\mc E^G(F) = \mc E(F) +\lambda\<F,F\>-2 \<F, G\>.
\]

By Proposition \ref{p2}, $\mc E_n^G$ is $\Gamma$-convergent to $\mc E^G$. In particular, a sequence of minimizers $F_n$ of $\mc E_n^G$ converge to the minimizer $F$ of $\mc E^G$. Notice that $F_n$ is not uniquely defined in general, although $S_n F_n$ it is. By the discussion above, $(\lambda-\mc L_n) S_n F_n = S_n G$ and $(\lambda -\mc L) F =G$. Since the operator norm of $S_n$ is bounded by $\Theta$, we conclude that the $\mc L^2(X_n)$-norm of $S_n F_n-S_n F$ converges to 0 as $n \to \infty$.
By {\bf (H2)}, we conclude that $\mc E_n(F_n)$ converges to $\mc E(F)$.

Now we are ready to define the corrected empirical measure $\hat \pi_t^n$. Take a function $G \in \mc K$ and define $H=(\lambda-\mc L) G$. Define $G_n$ as a minimizer of $\mc E_n^H$. Notice that in this way $S_n G_n$ is uniquely defined. Then we define 
\[
\hat \pi_t^n(G) = \frac{1}{a_n} \sum_{x \in X_n} \eta_t^n(x) S_n G_n(x).
\]

In order to prove that $\hat \pi_t^n(G)$ is well defined, we need to prove that $\sum_x S_n G_n(x)$ is finite. Remember that $(\lambda - \mc L_n) S_n G_n = S_n H$. Consider the continuous-time random walk with jump rates $\omega_{x,y}^n$. Remember that the condition $\sup_x \sum_y \omega_{x,y}^n$ ensures that this random walk is well defined. Let $p_t^n(x,y)$ be its transition probability function. An explicit formula for $S_n G_n$ in terms of $p_t^n(x,y)$ is
\[
S_n G_n (x) = \int_0^\infty e^{-\lambda t} \sum_{y \in X_n} p_t^n(x,y) S_n H(y) dt.
\]

Since $\sum_x p_t(x,y)=1$ for any $y \in X_t$, we conclude that
\[
\frac{1}{a_n} \sum_{x \in X_n} S_n G_n(x) = \frac{1}{\lambda} \int H d\mu
\]
and in particular $S_n G_n$ is summable. We conclude that $\hat \pi_t^n(G)$ is well defined. Notice that it is not clear at all if $\hat \pi_t^n$ is well defined as a measure in $X$.  

\subsection{Tightness of $\{\pi_\cdot^n\}_n$ and proof of Theorem \ref{t1}}

In this section we prove tightness of $\{\pi_\cdot^n\}_n$ and we prove Theorem \ref{t1}. As we will see, we rely on the corrected empirical measure, which turns out to be the right object to be studied. 
By {\bf (H5)}, we have
\[
\lim_{n \to \infty} \bb P_n \Big( \sup_{0 \leq t <+\infty} \big|\pi_t^n(G) - \hat \pi_t^n(G)\big|>\epsilon\Big) =0.
\]

Notice that {\bf (H5)} can be substituted by the following condition, which can be sometimes proved directly.
\begin{description}
  \item[{\bf (H5')}] For any $G \in \mc K$,
  \[
  \lim_{n \to \infty} \frac{1}{a_n} \sum_{x \in X_n} \big| S_n G_n(x) - S_n G(x)\big| =0.
  \]
\end{description}

In particular, $\{\pi_\cdot^n(G)\}_n$ is tight if and only if $\{\hat \pi_\cdot^n(G)\}_n$ is tight. The usual way of proving tightness of $\{\hat \pi_\cdot^n(G)\}_n$ is to use a proper martingale decomposition. A simple computation based on Dynkin's formula shows that
\begin{equation}
\label{ec1}
\mc M_t^n(G) = \hat \pi_t^n(G) - \hat \pi_0^n(G) - \int_0^t \pi_s^n(\mc L_n S_n G_n) ds
\end{equation}
is a martingale. The quadratic variation of$\mc M_t^n(G)$ is given by
\[
\<\mc M_t^n(G)\> = \int_0^t \frac{1}{a_n^2} \sum_{x,y \in X_n} \big(\eta_s^n(y)-\eta_s^n(x)\big)^2 \omega^n_{x,y} \big( S_n G_n(y) -S_n G_n(x)\big)^2 ds.
\]
In particular, $\<\mc M_t^n(G)\> \leq t a_n^{-1} \mc E_n(G_n)$. At this point, the convenience of introducing the corrected empirical process becomes evident. By definition, $\mc L_n S_n G_n = S_n \mc L G +\lambda(S_n G_n - S_n G)$. Since $H= (\lambda-\mc L)G$, the function $G$ is the minimizer of $\mc E^H$. Therefore, $G_n$ converges to $G$ in $\mc L^2(X)$. By {\bf (H2)}, the $\mc L^2(X_n)$-norm of $S_n G_n -S_n G$ goes to 0 and $\mc E_n(G_n)$ converges to $\mc E(G)$. 

We conclude that $\mc M_t^n(G)$ converges to 0 as $n \to \infty$, and in particular the sequence $\{\mc M_\cdot^n(G)\}_n$ is tight. In the other hand, the integral term in (\ref{ec1}) is equal to
$\int_0^t \pi_s^n(\mc L G)ds$.

Notice that $\pi_s^n(\mc L G) \leq \int |\mc L G| d \mu$ for any $t \geq 0$, from where we conclude that the integral term is of bounded variation, uniformly in $n$. Tightness follows at once. Since $\{\hat \pi_0^n(G)\}_n$ is tight by comparison with $\{\pi_0^n(G)\}_n$, we conclude that $\{\hat \pi_\cdot^n(G)\}_n$ is tight, which proves the first part of Theorem \ref{t1}. As a by-product, we have obtained tightness for $\{\pi_\cdot^n\}_n$ as well, and the convergence result
\[
\lim_{n \to \infty}\Big\{ \pi_t^n(G) -\pi_0^n(G) - \int_0^t \pi_s^n(\mc L G) ds\Big\} =0
\]
for any $G \in \mc K$. Notice that we have exchanged $\hat \pi_t^n(G)$ by $\pi_t^n(G)$. Let $\pi_\cdot$ be a limit point of $\{\pi_\cdot^n\}_n$. Then, $\pi_\cdot$ satisfies the identity
\[
\pi_t(G) -\pi_0(G) - \int_0^t \pi_s(\mc L G) ds =0
\] 
for any function $G \in \mc K$. By hypothesis, $\pi_0(dx) = u_0(x)\mu(dx)$. Repeating the arguments for a function $G_t(x) = G_0(x) + t G_1(x)$ with $G_0,G_1 \in \mc K$, we can prove that
\[
\pi_t(G_t) - \pi_0(G_0) -\int_0^t \pi_s((\partial_t+\mc L)G_s)ds=0
\] 
for any piecewise-linear trajectory $G_\cdot:[0,T] \to \mc K$. The same identity holds by approximation for any smooth path $G_\cdot: [0,T] \to \mc C_c(X)$, which proves that the process $\pi_\cdot$ is concentrated on weak solutions of the hydrodynamic equation. When such solutions are unique, the process $\pi$ is just a $\delta$-distribution concentrated on the path $u(t,x)\mu(dx)$. Since compactness plus uniqueness of limit points imply convergence, Theorem \ref{t1} is proved. 

\section{Energy solutions and energy estimate}
\label{s3}
In this section we define what we mean by {\em energy solutions} of Equation (\ref{echid}), we prove that any limit point of the empirical measure $\{\pi_\cdot^n\}$ is concentrated on energy solutions of (\ref{echid}) and we give a simple criterion for uniqueness of such solutions. 

\subsection{Energy solutions} 

Let $\mc E : H \to \bar{\bb R}$ be a quadratic form defined over a Hilbert space $H$ of inner product $\<\cdot,\cdot\>$. We say that $\mc E$ is {\em closable} if for any sequence $\{f_n\}_n$  converging in $H$ to some limit $f$ such that $\mc E(f_n-f_m)$ goes to $0$ as $n, m \to \infty$, we have $f=0$. 
Let $\mc E: H \to \bar{\bb R}$ be closable. We define $\mc H_1=\mc H_1(\mc E)$ as the closure of the set $\{f\in H; \mc E(f)<+\infty\}$ under the norm $||f||_1 = (\mc E(f)+ \<f,f\>)^{1/2}$. 

We say that a dense set $K \subseteq H$ is a {\em kernel} of $\mc E$ if $\mc H_1$ is equal to the closure of $K$ under the norm $||\cdot||_1$.  We say that a  symmetric operator $\mc L: D(\mc L) \subseteq H \to H$ generates $\mc E$ if $\mc E(f)=\<f,-\mc Lf\>$ for $f \in D(\mc L)$ and $D(\mc L)$ is a kernel of $\mc E$. 

Fix $T >0$. For a function $u: [0,T] \to H$ we define the norm
\[
||u||_{1,T} = \Big( \int_0^T ||u_t||_1^2 dt \Big)^{1/2}
\]
and we define $\mc H_{1,T}$ as the Hilbert space generated by this norm. 
Given a closable form $\mc E$ generated by the operator $\mc L$, we say that a trajectory $u: [0,T] \to H$ is an {\em energy solution} of (\ref{echid}) if $u \in \mc H_{1,T}$ and for any differentiable trajectory $G: [0,T] \to \mc H_1$  with $G(T)=0$ we have
\[
\<G_0, u_0\> + \int_0^T \Big\{ \<\partial_t G_t, u_t\> - \mc E(G_t,u_t)\Big\} dt =0.
\]

In other words, an energy solution of (\ref{echid}) is basically a weak solution belonging to $\mc H_{1,T}$. In fact, by taking suitable approximations of $G$, it is enough to prove this identity for trajectories $G$ such that $G_t \in K$ for any $t \in [0,T]$, where $K$ is any kernel of $\mc E$ contained in $D(\mc L)$. Notice that the norm in $\mc H_{1,T}$ is stronger than the norm $\int_0^T u_t^2 dt$, and therefore a weak solution is effectively weaker than an energy solution of (\ref{echid}).

\subsection{The energy estimate}

In this section we prove that the limit points of the empirical measure are concentrated on energy solutions of (\ref{echid}). For simplicity, we work on finite volume. From now on we assume that $X$ is compact. Therefore, there exists a constant $\kappa$ such that the cadinality of $X_n$ is bounded by $\kappa a_n$. We have the following estimate.

\begin{theorem}
\label{t2}
Fix $T >0$. Let $\{H^i: X_n \times X_n \times [0,T] \to \bb R; i=1,\dots,l\}$ be a finite sequence of functions. There exists a constant $C=C(T)$ such that
\begin{multline}
\label{en.est}
\bb E_n \Big[ \sup_{i=1,\dots,l} \int_0^T  \Big\{ \frac{2}{a_n} \sum_{x,y \in X_n} \omega^n_{x,y} H_{x,y}^i(t) \big( \eta_t^n(y) -\eta_t^n(x)\big) \\
	-\frac{1}{a_n} \sum_{x,y \in X_n} \omega_{x,y}^n (H_{x,y}^i)^2 \eta_t^n(x)\Big\} dt \Big]
\leq C + \frac{\log l}{a_n}.
\end{multline}
\end{theorem}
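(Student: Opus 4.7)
The plan is to follow the classical superexponential estimate procedure: combine an elementary log-sum-exp inequality for the supremum over $i$ with the Feynman-Kac formula for the exponential moments, and bound the resulting Rayleigh quotient via Young's inequality against the Dirichlet form of $L_n$. The key structural feature to exploit is that every Bernoulli product measure on $\Omega_n$ is reversible for $L_n$, because the exclusion process is symmetric and conservative.

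First, fix a Bernoulli product measure $\bar\nu$ on $\Omega_n$ and apply, for any $\gamma>0$, the elementary bound
\[
\bb E_{\nu_n}\Big[\sup_{i}\int_0^T V_t^i\,dt\Big] \leq \frac{1}{\gamma}\log\Big(l\cdot\max_i\bb E_{\nu_n}\big[e^{\gamma\int_0^T V_t^i dt}\big]\Big),
\]
where $V_t^i$ denotes the bracketed integrand of (\ref{en.est}); choosing $\gamma$ of order $a_n$ will produce the $(\log l)/a_n$ term. One then changes the reference measure from $\nu_n$ to $\bar\nu$ using the entropy inequality at cost $H(\nu_n|\bar\nu)/\gamma$, which under {\bf (H5)} and the compactness of $X$ is of order $a_n/\gamma$ and is absorbed into $C(T)$. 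The Feynman-Kac variational formula for the reversible pair $(L_n,\bar\nu)$ then reduces the problem to the deterministic estimate
\[
\sup_{f:\,\int f^2 d\bar\nu=1}\Big\{\gamma\int V_t^i f^2\,d\bar\nu - D_n(f)\Big\}\leq 0\quad\text{uniformly in } t,i,n,
\]
where $D_n(f)=\tfrac{1}{2}\sum_{x,y\in X_n}\omega^n_{x,y}\int(f(\eta^{x,y})-f(\eta))^2 d\bar\nu$ is the Dirichlet form of $L_n$ with respect to $\bar\nu$.

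The core computation is the variational estimate above. After symmetrizing so that one may assume $H^i_{y,x}=-H^i_{x,y}$ (the symmetric part is killed by $\eta(y)-\eta(x)$ in the linear sum, and any remaining cross-contribution to the quadratic term is controlled by Cauchy--Schwarz), use the $\bar\nu$-invariance of the swap $\eta\mapsto\eta^{x,y}$ to rewrite
\[
2\int H^i_{x,y}(\eta(y)-\eta(x))f^2\,d\bar\nu = \int H^i_{x,y}(\eta(y)-\eta(x))\big(f(\eta)-f(\eta^{x,y})\big)\big(f(\eta)+f(\eta^{x,y})\big)\,d\bar\nu.
\]
Young's inequality with parameter $\beta>0$, together with $(f+f^{x,y})^2\leq 2(f^2+(f^{x,y})^2)$, the bound $(\eta(y)-\eta(x))^2\leq\eta(x)+\eta(y)$, and the symmetrization $\sum_{x,y}\omega^n_{x,y}(H^i_{x,y})^2(\eta(x)+\eta(y))=2\sum_{x,y}\omega^n_{x,y}(H^i_{x,y})^2\eta(x)$ (valid because $(H^i)^2$ is then symmetric in $(x,y)$), yields
\[
\gamma\int V^i_t f^2\,d\bar\nu - D_n(f) \leq \Big(\tfrac{\gamma\beta}{a_n}-1\Big)D_n(f) + \tfrac{\gamma}{a_n}\Big(\tfrac{4}{\beta}-1\Big)\sum_{x,y}\omega^n_{x,y}(H^i_{x,y})^2\int\eta(x)f^2\,d\bar\nu.
\]
Choosing $\beta=4$ and $\gamma=a_n/4$ makes both parenthesized factors vanish, so $\bb E_{\bar\nu}\big[e^{(a_n/4)\int_0^T V^i_t dt}\big]\leq 1$.

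The main obstacle is precisely this balance of constants: the $2{:}1$ ratio between the linear and quadratic ingredients of $V^i_t$ is exactly what makes the positive square produced by integration by parts absorbable into the $-\frac{1}{a_n}\sum\omega^n(H^i)^2\eta(x)$ penalty, while leaving the Dirichlet-form part dominated by $D_n(f)$. Combining the three steps with $\gamma=a_n/4$ delivers the stated inequality, up to an inessential multiplicative constant in front of $(\log l)/a_n$ that is absorbed into $C(T)$.
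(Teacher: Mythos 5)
Your proof follows essentially the same route as the paper: entropy inequality against a Bernoulli product (hence invariant) reference measure, the $\exp\{\sup_i b_i\}\le\sum_i\exp\{b_i\}$ trick to extract the supremum at cost $(\log l)/\gamma$, Feynman--Kac plus the Rayleigh-quotient variational formula, and Young's inequality after the swap $\eta\mapsto\eta^{x,y}$ to split the linear term between the quadratic penalty and the Dirichlet form. The only deviation is your choice $\gamma=a_n/4$, which yields $4\log l/a_n$ rather than $\log l/a_n$ --- strictly speaking this cannot be ``absorbed into $C(T)$'' since $C$ must not depend on $l$, but it is harmless in every application of the theorem (where $l$ is fixed and $n\to\infty$), and your explicit balancing of $\beta$ and $\gamma$ is in fact more careful than the paper's choice $\beta_{x,y}^n=1/\omega_{x,y}^n$ with the exponent $a_n\int F$, which does not visibly close the variational estimate with the stated $2{:}1$ constants.
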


\begin{proof}

Before starting the proof of this theorem, we need some definitions. Fix $\rho >0$. Denote by $\nu^\rho$ the product measure in $\Omega_n$ defined by 
\[
\nu^\rho\big(\eta(x_1)=1,\dots,\eta(x_k)=1\big) = \rho^k.
\]

It is not difficult to check that the measure $\nu^\rho$ is left invariant under the evolution of $\eta_t$. For two given probability measures $P_1$, $P_2$, we define the entropy $H(P_1|P_2)$ of $P_1$ with respect to  $P_2$ as
\[
H(P_1|P_2)= 
\begin{cases}
+ \infty, & \text{ if $P_1$ is not absolutely continuous with respect to $P_2$}\\
\int \log \frac{dP_1}{dP_2} d P_1 & \text{ otherwise. }\\
\end{cases}
\]

For $\eta \in \Omega_n$, denote by $\delta_\eta$ the Dirac measure at $\eta$. It is not difficult to see that $H(\delta_\eta|\nu^\rho) \leq C(\rho) a_n$ for any $\eta \in \Omega_n$, where $C(\rho)$ is a constant that can be chosen independently from $n$. Let us denote by $\bb P^\rho$ the distribution in $D([0,T],\Omega_n)$ of the process $\eta_t^n$ with initial distribution $\nu^\rho$. By the convexity of the entropy, $H(\bb P_n|\bb P^\rho) \leq C(\rho,T)a_n$ for a constant $C(\rho,T)$ not depending on $n$. The following arguments are standard and can be found in full rigor in \cite{KL}. Let us denote by $F^i(s)$ the function (depending on $H^i(s)$ and $\eta_s^n$) under the time integral in \eqref{en.est}. By the entropy estimate,
\begin{align*}
\bb E_n \Big[ \sup_{i=1,\dots,l} \int_0^T  F^i(t)dt\Big]
	\leq \frac{H(\bb P_n|\bb P^\rho)}{a_n} +
	\frac{1}{a_n} \log \bb E^\rho \Big[ \exp\big\{ \sup_{i=1,\dots,l} a_n \int_0^T F^i(t)dt\big\}\Big].
\end{align*}

In order to take the supremum out of the expectation, we use the inequalities $\exp\{\sup_i b_i\} \leq \sum_i\exp\{b_i\}$ and $\log\{\sum_i b_i\} \leq \log l + \sup_i \log b_i$, valid for any real numbers $\{b_i, i=1,\dots,l\}$. In this way we obtain the bound
\begin{equation}
 \label{ec2}
\begin{split}
\bb E_n \Big[ \sup_{i=1,\dots,l} \int_0^T  F^i(t)dt\Big]
	&\leq C(\rho,T) +\frac{\log l}{a_n} \\
	&\quad+\sup_{i=1,\dots,l} \frac{1}{a_n} \log \bb E^\rho \Big[ \exp\big\{  a_n\int_0^T F^i(t)dt\big\}\Big].
\end{split}
\end{equation}

Therefore, it is left to prove that the last supremum is not positive. It is enough to prove that the expectation $ \bb E^\rho \big[ \exp\big\{  \int_0^T F^i(t)dt\big\}\big]$ is less or equal than $1$ for any function $F^i$. From now on we drop the index $i$. By Feynman-Kac's formula plus the variational formula for the largest eigenvalue of the operator $F(t)+L_n$, we have
\[
\frac{1}{a_n} \log \bb E^\rho \Big[ \exp\big\{  a_n\int_0^T F(t)dt\big\}\Big]
	\leq \int_0^T \sup_f \big\{ \<F(t),f^2\>_\rho -\<f, -L_n f\>_\rho\},
\]
where we have denoted by $\<\cdot,\cdot\>_\rho$ the inner product in $\mc L^2(\nu_\rho)$ and the supremum is over functions $f \in \mc L^2(\nu_\rho)$. A simple computation using the invariance of $\nu_\rho$ shows that
\[
\<f, -L_n f\>_\rho = \sum_{x,y \in X_n} \omega_{x,y}^n \int \big[f(\eta^{x,y})-f(\eta)\big]^2 \nu_\rho(d\eta).
\]
Recall the expression for $F(t)$ in terms of $H$. We will estimate each term of the form $2 a_n^{-1} \<H_{x,y}(\eta(y)-\eta(x)),f^2\>_\rho$ separatedly:
\begin{align*}
\frac{2}{a_n} \<H_{x,y}(\eta(y)-\eta(x)),f^2\>_\rho
	&= \frac{2}{a_n} H_{x,y} \<\eta(x), f(\eta^{x,y})^2-f(\eta)^2\>_\rho  \\
	& \leq \frac{2}{a_n} \Big\{ \frac{(H_{x,y})^2\beta_{x,y}^n}{2} \<\eta(x), (f(\eta^{x,y})+f(\eta))^2\>_\rho \\
	&\quad+ \frac{1}{2 \beta_{x,y}^n} \<\eta(x),(f(\eta^{x,y})-f(\eta))^2\>_\rho \Big\}.
\end{align*}

Choosing $\beta_{x,y}^n = 1/\omega_{x,y}^n$ and putting this estimate back into (\ref{ec2}), we obtain the desired estimate.
\qed
\end{proof}

Take $G^i \in \mc K$ and take $H_{x,y}^i = S_n G_n^i(y) - S_n G_n^i(x)$, with $G_n^i$ defined as in Section \ref{s2.1}. Recall the identity $\mc L_n S_n G_n^i= S_n \mc L G^i + \lambda(S_n G_n^i -S_n G^i)$. The energy estimate (\ref{en.est}) gives
\[
 \bb E_n \Big[ \sup_{i=1,\dots,l} \int_0^T \big(2 \hat \pi_t^n(\mc L G^i) - \mc E_n(G_n^i) \big)dt \Big] \leq C(\rho,T) + C_1(l,n),
\]
where $C_1(l,n)$ is a constant that goes to 0 when $l$ is fixed and $n \to \infty$. Take a limit point of the sequence $\{\pi_\cdot^n\}_n$. We have already seen that $\hat \pi_t^n(\mc L G^i)$ converges to $\pi_t(\mc L G)$. Therefore, the process $\pi_\cdot$ satisfies
\[
 E \Big[ \sup_{i=1,\dots,l} \int_0^T \big(2 \pi_s(\mc L G^i) - \mc E(G^i)\big)dt\Big] \leq C(\rho,T).
\]

Similar arguments prove that for piecewise linear trajectories $\{G^i_t; i=1,\dots,l\}$ in $\mc K$, we have
\[
 E \Big[ \sup_{i=1,\dots,l} \int_0^T \big(2 \pi_s(\mc L G^i(t)) - \mc E(G^i(t))\big)dt\Big] \leq C(\rho,T).
\]

Since $l$ is arbitrary and piecewise linear trajectories with values in $\mc K$ are dense in $\mc H_{1,T}$, we conclude that $E[||\pi_\cdot||_{1,T}^2] <+\infty$, from where we conclude that $||\pi_\cdot||_{1,T}$ is finite $a.s.$
We establish this result as a theorem.

\begin{theorem}
 \label{t3}
 Let $\eta_t^n$ an exclusion process as in Theorem \ref{t1}.
 If one of the following conditions is satisfied,
 \begin{enumerate}
 \item[i)]
 $X$ is compact,
 \item[ii)]
 Assumption {\bf (H5')} holds and the entropy density is finite:
 \[
  \sup_{n} \frac{H(\bb P_n| \bb P^\rho)}{a_n} <+\infty,
 \]
\end{enumerate}
then any limit point of the sequence $\{\pi^n_\cdot(dx)\}_n$ is concentrated on energy solutions of the hydrodynamic equation (\ref{echid}). In particular, since such energy solutions are unique, the sequence $\{\pi_\cdot^n(dx)\}_n$ is convergent.
\end{theorem}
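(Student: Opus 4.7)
The theorem packages two separate statements: (a) every limit point of $\{\pi_\cdot^n\}_n$ lies in $\mc H_{1,T}$ almost surely, which combined with the weak-solution identity from Theorem \ref{t1} promotes it to an energy solution; and (b) energy solutions of (\ref{echid}) with prescribed initial datum are unique. Tightness of $\{\pi_\cdot^n\}_n$ and the weak-solution identity have already been established in Theorem \ref{t1}, so (a) and (b) together give the convergence statement at once.

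For (a) the strategy is the one sketched in the paragraph just above the theorem. Substitute $H^i_{x,y}=S_n G_n^i(y)-S_n G_n^i(x)$ into the energy inequality \eqref{en.est}, identify the first integrand with $2\hat\pi_t^n(\mc L G^i)$ using $\mc L_n S_n G_n^i=S_n \mc L G^i+\lambda(S_n G_n^i-S_n G^i)$ together with {\bf (H2)}, and pass to a subsequential limit point $\pi_\cdot$. After extending the resulting bound to piecewise-linear $\mc K$-valued trajectories $\{G^i_\cdot\}_{i\leq l}$ and letting $l\to\infty$ along a countable family dense in $\mc H_{1,T}$, the standard variational identity
\[
\sup_{G \in \mc K}\bigl\{2\pi_s(\mc L G)-\mc E(G)\bigr\}=\mc E(f_s) \qquad (\pi_s(dx)=f_s(x)\mu(dx))
\]
yields $E[\|\pi_\cdot\|_{1,T}^2]<\infty$. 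Hypotheses (i) and (ii) enter exactly where Section \ref{s2} and the proof of Theorem \ref{t2} used compactness: under (i), the bound $|X_n|\leq \kappa a_n$ supplies $H(\bb P_n|\bb P^\rho)/a_n=O(1)$ and {\bf (H5)} identifies the limit points of $\hat\pi^n$ and $\pi^n$; under (ii) these two facts are precisely what is assumed, and no other modification of the argument is needed.

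For (b) I would invoke the energy method. Given two energy solutions $u^1,u^2$ with common initial datum, set $w=u^1-u^2\in\mc H_{1,T}$; by linearity of the weak formulation it is a weak solution with $w_0=0$. Formally testing against $G_t=\int_t^T w_r\,dr$ (so that $\partial_t G_t=-w_t$ and $G_T=0$), and using symmetry of $\mc E$ to rewrite $\mc E(G_t,\partial_t G_t)=\tfrac{1}{2}\partial_t \mc E(G_t)$, the weak formulation collapses to
\[
\int_0^T \langle w_t,w_t\rangle\, dt + \tfrac{1}{2}\,\mc E\Big(\int_0^T w_r\,dr\Big)=0,
\]
from which $w\equiv 0$ since both summands are non-negative. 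This trajectory $G_\cdot$ is not $\mc K$-valued, so the step must be closed by approximating $w_\cdot$ in $\mc H_{1,T}$ by piecewise-linear $\mc K$-valued paths, which is permitted precisely because $\mc K$ is a kernel of $\mc E$. The main technical obstacle is this simultaneous space-time approximation while controlling the cross terms $\mc E(G_t,w_t)$; once (a) and (b) are in place, the convergence assertion follows from Theorem \ref{t1} together with the standard fact that tightness plus uniqueness of the limit implies convergence in probability in the Skorohod topology.
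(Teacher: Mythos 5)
Your proposal follows essentially the same route as the paper: part (a) is exactly the application of the energy estimate of Theorem \ref{t2} with $H^i_{x,y}=S_nG_n^i(y)-S_nG_n^i(x)$, passage to the limit, extension to piecewise-linear $\mc K$-valued trajectories and the variational characterization of $\|\cdot\|_{1,T}$, while part (b) is the paper's uniqueness argument with the test function $G_t=-\int_t^T u_s\,ds$ (your sign convention is equivalent) and the same closing approximation in $\mc H_{1,T}$. The argument is correct and no further comment is needed.
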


\subsection{Uniqueness of energy solutions}

In this section we prove uniqueness of energy solutions for (\ref{echid}). Since the equation is linear, it is enough to prove uniqueness for the case $u_0 \equiv 0$. Let $u_t$ be a solution of (\ref{echid}) with $u_0 \equiv 0$. Then,
\[
 \int_0^T \big\{ \<\partial_t G_t , u_t\> - \mc E(G_t,u_t)\big\}dt =0
\]
for any differentiable trajectory in $\mc H_{1,T}$ with $G_T=0$. Take $G_t = - \int_t^T u_s ds$. Then $\partial_t G_t = u_t$ and the first term above is equal to $\int_0^T\<u_t,u_t\>dt$. An approximation procedure and Fubini's theorem shows that the second term above is equal to
\[
 \frac{1}{2} \mc E\Big(\int_0^T u_t dt\Big).
\]

Both terms are non-negative, so we conclude that $\int_0^T \<u_t,u_t\>dt =0$ and $u_t \equiv 0$.

\section{Applications}
\label{s4}
In this section we give some examples of systems on which Theorems \ref{t1} and \ref{t3} apply. In the literature, the sequence $\omega^n$ is often referred as the set of {\em conductances} of the model. Unless stated explicitely, in these examples, $X$ will be equal to $\bb R^d$ or the torus $\bb T^d = \bb R^d / \bb Z^d$. The set $X_n$ will be equal to $n^{-1} \bb Z^d$ and we construct the partitions $\{\mc U_x^n\}$ in the canonical way, taking $\mc  U_x^n$ as a continuous, piecewise linear function with $\mc U_x^n(x)=1$ and $\mc U_x^n(y)=0$ for $y\in X_n$, $y \neq x$.

\subsection{Homogenization of ergodic, elliptic environments}

Let $(\Omega,\mc F,P)$ be a probability space. Let $\{\tau_x; x \in \bb Z^d\}$ be a family of $\mc F$-mesurable maps $\tau_x:\Omega \to \Omega$ such that
\begin{description}
\item[i)] $P(\tau_x^{-1} A) = P(A)$ for any $A \in \mc F$, $x \in \bb Z^d$.
\item[ii)] $\tau_x \tau_{x'} = \tau_{x+x'}$ for any $x,x' \in \bb Z^d$.
\item[iii)] If $\tau_x A =A$ for any $x \in \bb Z^d$, then $P(A)=0$ or $1$.
\end{description}

In this case, we say that the family $\{\tau_x\}_{x \in \bb Z^d}$ is ergodic and invariant under $P$. Let $a=(a_1,\dots,a_d): \Omega \to \bb R^d$ be an $\mc F$-measurable function. Assume that there exists $\epsilon_0>0$ such that
\[
\epsilon_0 \leq a_i(\omega) \leq \epsilon_0^{-1} \text{ for all } \omega \in \Omega \text{ and } i=1,\dots,d.
\] 

We say in this situation that the environment satisfies the {\em ellipticity condition}. Fix $\omega \in \Omega$. Define $\omega^n$ by $\omega^n_{x,x+e_i/n}=\omega^n_{x+e_i/n,x}= n^2 a_i(\tau_nx \omega)$, $\omega^n_{x,y}=0$ if $|y-x| \neq 1/n$. Here $\{e_i\}_i$ is the canonical basis of $\bb Z^d$. In this case, $a_n = n^d$ and $\mu$ is the Lebesgue measure in $\bb R^d$.
In \cite{PV}, it is proved that there is a positive definite matrix $A$ such that the quadratic form $\mc E_n$ associated to $\omega^n$ is $\Gamma$-convergent to $\mc E(f)=\int \nabla f \cdot A \nabla f dx$, $P-a.s.$ In particular, Theorem \ref{t1} applies with $\mc L f= \text{div}(A \nabla f)$. This result was first obtained in \cite{GJ}. 

\subsection{The percolation cluster}

Let $e=\{e^i_x; x \in \bb Z^d, i=1,\cdots,d\}$ be a sequence of i.i.d. random variables, with $P(e_x^i=1) = 1- P(e_x^i=0) =p$ for some  $p=(0,1)$. Define for $x,y  \in X_n$, $\omega^n_{x,x+e_i/n}=\omega^n_{x+e_i/n,x}= n^2 e_{nx}^i$, $\omega^n_{x,y}=0$ if $|y-x| \neq 1/n$. Fix a realization of $e$. We say that two points $x, y \in X_n$ are connected if there is a finite sequence $\{x_0=x,\dots,x_l=y\} \subseteq X_n$ such that $|x_{i-1}-x_i|=1/n$ and $\omega_{x_{i-1},i}^n =1$ for any $i$. Denote by $\mc C_0$ the set of points connected to the origin. It is well known that there exists $p_c \in (0,1)$ such that $\theta(p)= P(\mc C_0 \text{ is infinite })$ is 0 for $p<p_c$ and positive for $p>p_c$. Fix $p>p_c$. Define $a_n = n^d$ and $\mu_0(dx) = \theta(p) dx$. In \cite{F2}, it is proved that there exists a constant $D$ such that, $P-a.s$ in the set $\{\mc C_0 \text{ is infinite }\}$, the quadratic form $\mc E_n$ associated to the environment $\omega^n$ restricted to $\mc C_0$ is $\Gamma$-convergent to $\mc E(f) = \theta(p) D \int (\nabla f)^2 dx$. Theorem \ref{t1} applies with $\mc L = D \Delta$, assuming that the initial measures $\nu_n$ put mass zero in configurations with particles outside $\mc C_0$.
This result was first obtained in \cite{Fag}, relying on a duality representation of the simple exclusion process.

\subsection{One-dimensional, inhomogeneous environments} 

In dimension $d=1$, the $\Gamma$-convergence of $\mc E_n$ can be studied explicitely. For nearest-neighbors environments ($\omega^n_{x,y} =0$ if $|x-y|=1$), $\Gamma$-convergence of $\mc E_n$ is equivalent to convergence in distribution of the measures
\[
W_n(dx) = \frac{1}{n} \sum_{x \in \bb Z} (\omega_{x,x+1}^n)^{-1} \delta_{x/n}(dx).
\] 

Let $W(dx)$ be the limit. We assume that $W(dx)$ gives positive mass to any open set. For simplicity, suppose that $W(\{0\})=0$. Otherwise, we simply change the origin to another point with mass zero. For two functions $f,g: \bb R \to \bb R$ we say that $g = df/dW$ if
\[
f(x) = f(0) + \int_0^x g(y) W(dy).
\]
Then $\mc E_n$ is $\Gamma$-convergent to the quadratic form defined by $\mc E(f) = \int (df/dW)^2 dW$. In this case, $\mc L = d/dx d/dW$. A technical difficulty appears if $W(dx)$ has atoms. In that case, there is no kernel $\mc K$ for $\mc L$ contained in $\mc C_c(\bb R)$. To overcome this point, we define for $x \leq y$, $d_W(x,y)=d_W(y,x)= W((x,y])$. The function $d_W$ is a metric in $\bb R$, and in general $\bb R$ is {\em not} complete under this metric: an increasing  sequence $x_n$ converging to $x$ is always a Cauchy sequence with respect to $d_W$, but $d_W(x_n,x)\geq W(\{x\})$, which is non-zero if $x$ is an atom of $W$. Define $\bb R_W = \bb R \cup \{x-; W(\{x\})>0\}$. It is easy to see that $\bb R_W$ is a complete, separable space under the natural extension of $d_W$, and that continuous functions in $\bb R_W$ are in bijection with  c\`adl\`ag functions in $\bb R$ with discontinuity points contained on the set of atoms of $W(dx)$. It is not difficult to see that the set of $W$-differentiable functions in $\mc C_c(\bb R_W)$ is a kernel for $\mc L$ and that Theorems \ref{t1} and \ref{t3} apply to this setting. In \cite{FJL}, the remarkable case on which $W(dx)$ is a {\em random}, self-similar measure (an $\alpha$-stable subordinator) was studied in great detail. 

\subsection{Finitely ramified fractals} 

Let us consider the following sequence of graphs in $\bb R^2$. Define $a_0=(0,0)$, $a_1=(1/2,\sqrt 3/2),$ and  $a_2= (1,0)$ and define $\varphi_i: \bb R^2 \to \bb R^2$ by taking $\varphi_i(x) = (x+a_i)/2$. Define $X_0= \{a_0,a_,a_2\}$ and $X_{n+1}= \cup_i \varphi_i(X_n)$ for $n \geq 0$. For $x,y \in X_0$ we define $\omega^0_{x,y}=1$, we put $\omega_{x,y}^0=0$ if $\{x,y\} \subsetneq X$ and inductively we define
\[
\omega_{x,y}^{n+1} = 5 \sum_i \omega^n_{\varphi^{-1}_i(x), \varphi^{-1}(y)}.
\]

The set $X_n$ is  a discrete approximation of the Sierpinski gasket $X$ defined as the unique compact, non-empty set $X$ such that $X = \cup_i \varphi_i(X)$. Here we are just saying that $\omega_{x,y}^n = 5^n$ if $x,y$ are neighbors in the canonical sense. In this case $a_n=3^n$ and $\mu$ is the Hausdorff measure in $X$. It has been proved \cite{Kig} that the quadratic forms $\mc E_n$ converge to a certain Dirichlet form $\mc E$ which is used to define an abstract Laplacian in $X$. In particular, Theorems \ref{t1} and \ref{t3} apply to this model. This result was obtained in \cite{Jar3} in the context of a zero-range process. The same result can be proved for general {\em finitely ramified fractals}, in the framework of \cite{Kig}.

\end{document}